\newtheoremstyle{custom}{3pt}{3pt}{}{}{\bfseries}{:}{.5em}{}
\theoremstyle{custom}
\newtheorem{example}    {Example}
\newtheorem{theorem}    [example]{Theorem}    
\newtheorem{lemma}      [example]{Lemma}
\newtheorem{corollary}  [example]{Corollary}
\newtheorem{remark} 		[example]{Remark}
\newcommand{\R}{\mathbb{R}}
\newcommand{\N}{\mathbb{N}}
\newcommand{\A}{\mathcal{A}}
\newcommand{\G}{\mathcal{G}}
\newcommand{\D}{\mathcal{D}}
\renewcommand{\P}{\mathcal{P}}
\newcommand{\T}{\mathcal{T}}
\def \tx {\widetilde{X}}
\def \tPt {\widetilde{\P}^t}
\def \tphi {\widetilde{\phi}}
\def \tG {\widetilde{\G}}
\newcommand{\inter}{\mathrm{int}\, }
\title{Discrete infinitesimal generator\\ of the Frobenius--Perron operator semigroup\\ associated with ``outflow systems''}
\author{P\'eter~Koltai}
\begin{document}

\maketitle
\begin{abstract}
In this technical report the $C_0$ semigroup of Frobenius--Perron operators on $L^1(X)$ is considered, where the underlying dynamical system is such that trajectories may leave the state space $X$ and terminate. We introduce a discrete infinitesimal generator and show, that the operator semigroup generated by this discrete generator converges in $L^1(X)$ pointwise to the Frobenius--Perron operator of the system.
\end{abstract}

\section{How to read this report}

For a deeper insight into the motivation of the work done here we refer to~\cite{FroJK10}, \cite{KPPHD} Chapter~5, and~\cite{Kol10}. Also, the proofs presented below are essentially based on proofs in~\cite{KPPHD}, and only the differences between them are discussed here in detail and with mathematical rigor. Nevertheless, Section~\ref{sec:intro}, the introduction, can be read with some basic knowledge on semigroups of linear operators (see, e.g.~\cite{Pazy83}), and on the theory of transfer operators (see, e.g.~\cite{LaMa94}). The main result is Corollary~\ref{cor:main}.

\section{Introduction}	\label{sec:intro}

\paragraph{The outflow system.}

Let $X\subset\R^d$ be a compact set with nonempty interior, such that the outward pointing normal vector $\mathbf{n}$ exists almost everywhere on $\partial X$ w.r.t.\ the $d-1$ dimensional Lebesgue measure $m_{d-1}$. Further, let ${v\in C^2(\R^d,\R^d)}$ be a vector field defining the flow $\phi^t$ by
\[
\frac{d}{dt}\phi^t\cdot = v\left(\phi^t\cdot\right).
\]
We would like to track redistribution of mass caused by the dynamics \textit{only in} $X$. What leaves $X$ is ignored and considered as lost. Hence, the corresponding dynamical system is defined as
\[
\tphi^t x = \left\{ \begin{array}{ll}
											\phi^t x, & \text{if }\phi^{\text{sign}(t) s} x\in\inter X\text{ for all }s\in[0,|t|], \\
											\text{lost}, & \text{otherwise},
										\end{array}\right.
\]
where $\tphi^t\text{lost}=\text{lost}$ for all $t\ge0$, and its state space is given by $\tx:=X\cup\{\text{lost}\}$. We call the restriction of this system to $X$, i.e.\ the pair $(X,\tphi^t\vert_X)$, the \textit{outflow system}. For simplicity, we denote $\tphi^t\vert_X$ again by $\tphi^t$. The associated transfer operator ${\tPt:L^1( X)\to L^1( X)}$, $t\ge0$, is given by
\begin{equation}
\tPt u(x) = \left\{ \begin{array}{ll}
											u\left(\phi^{-t} x\right)\left|\det\left(D_x\phi^{-t}x\right)\right|, & \text{if }\phi^{-s} x\in\inter X\text{ for all }s\in[0,t], \\
											0, & \text{otherwise},
										\end{array}\right.
\label{eq:FPOout}
\end{equation}
where $D_x$ denotes the derivative w.r.t.\ the spatial ($x$) coordinate. It is seen easily that $\{\tPt\}_{t\ge0}$ is an operator semigroup. We denote its infinitesimal generator by $\tG$, and the domain of $\tG$ by $\D(\tG)$. Note, that ${C_v^1( X)\subset\D(\tG)}$, with
\[
C_v^1( X) = \left\{u\in C^1( X)\big\vert u=0\text{ on }\{v\cdot\mathbf{n}<0\}\subset\partial X\right\},
\]
where $x\cdot y$ denotes for $x,y\in\R^d$ the usual inner product. In general, $C^1(X)\nsubseteq \D(\tG)$.

\paragraph{The discretization.}

Let ${\{X_n\}_{n\in\N}}$ be a family of sets resulting from uniform\footnote{A partition (covering) of a set is called \textit{uniform} if all partition (covering) elements are mutually congruent.} box coverings (we call a hyperrectangle in $\R^d$ a box) with fineness tending to zero, such that $ X\subseteq X_n$ for all $n$. The set $X_n$ is the union of disjoint boxes $X_{n,i}$, where $X\cap X_{n,i}\neq\emptyset$ for all $n,i$. Let $V_n$ be the space of piecewise constant functions on the corresponding covering, and ${\pi_n:L^1(\R^d)\to V_n}$ the $L^2$-orthogonal projection to $V_n$, i.e.
\[
\pi_n u = \sum_{i} \left(\frac{1}{m(X_{n,i})}\int_{X_{n,i}}\!\!u\right)\chi_{X_{n,i}}.
\]
Note, that $\pi_nu$ vanishes outside of $X_n$.

We wish to discretize the generator $\tG$ on $X_n$. The difficulty that $\tG$ operates on functions supported only on $X\subset X_n$ is solved as follows. We define the discrete generator ${\tG_n:L^1(\R^d)\to V_n}$ by means of the original semigroup $\P^t$:
\begin{equation}
\tG_n u = \lim_{t\to0}\frac{\pi_n\P^t\pi_n u-\pi_n u}{t}.
\label{eq:IGout}
\end{equation}
That the right hand side of~\eqref{eq:IGout} exists for all $u\in L^1(\R^d)$ is a simple corollary of lemmas~5.9 and 5.11 in~\cite{KPPHD}.

In this report we show that the semigroup generated by $\tG_n$ on $L^1(X)$ converges pointwise to $\tPt$ as $n\to\infty$ for every fixed $t\ge0$.

\section{Convergence proof}

We claim the convergence based on the following theorem. The lemmas below prove that its assumptions are valid for our setting.
\begin{theorem}[Theorem~3.4.5 \cite{Pazy83}]	\label{T:ig_approx}
Let $\A_n\in G(M,\omega)$ and assume:
\begin{enumerate}[(a)]
	\item As $n\to\infty$, $\A_n u\to \A u$ for every $u\in D$, where $D$ is a dense subset of $L^1( X)$.
	\item There exists a $\lambda_0$ with Re$\, \lambda_0>\omega$ for which $(\lambda_0I-\A)D$ is dense in $L^1( X)$.
\end{enumerate}
Then, the closure $\bar{\A}$ of $\A$ is in $G(M,\omega)$. If $\T_n(t)$ and $\T(t)$ are the $C_0$ semigroups generated by $\A_n$ and $\bar{\A}$ respectively, then
\[
\lim_{n\to\infty}\T_n(t) u = \T(t)u\qquad\text{ for all }t\ge 0, u\in L^1( X),
\]
and this limit is uniform in $t$ for $t$ in bounded intervals.
\end{theorem}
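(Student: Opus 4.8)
This statement is the classical Trotter--Kato approximation theorem (in Pazy's formulation), and I would prove it by the resolvent route. First, since each $\A_n\in G(M,\omega)$, the Hille--Yosida theorem provides, for every $\lambda$ with $\mathrm{Re}\,\lambda>\omega$, the resolvents $R(\lambda:\A_n)=(\lambda I-\A_n)^{-1}$ together with the power bounds $\|R(\lambda:\A_n)^k\|\le M(\mathrm{Re}\,\lambda-\omega)^{-k}$; in particular $\{R(\lambda_0:\A_n)\}_n$ is uniformly bounded. The crucial step then is to show these resolvents converge strongly at $\lambda_0$. For $u\in D$ the algebraic identity
\[
R(\lambda_0:\A_n)(\lambda_0 I-\A)u \;=\; u \;+\; R(\lambda_0:\A_n)(\A_n-\A)u,
\]
together with assumption (a) and the uniform bound on $\|R(\lambda_0:\A_n)\|$, shows that $R(\lambda_0:\A_n)w\to u$ whenever $w=(\lambda_0 I-\A)u$ runs through $(\lambda_0 I-\A)D$. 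By assumption (b) this subspace is dense in $L^1(X)$, so a routine $3\varepsilon$/Banach--Steinhaus argument upgrades this to a strong limit $R(\lambda_0:\A_n)\to R(\lambda_0)$ on all of $L^1(X)$, with $\|R(\lambda_0)\|\le M(\mathrm{Re}\,\lambda_0-\omega)^{-1}$.

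Next I would identify $R(\lambda_0)$ as a genuine resolvent. Because $\lambda\mapsto R(\lambda:\A_n)$ is analytic and locally uniformly bounded on $\{\mathrm{Re}\,\lambda>\omega\}$, a Vitali-type argument propagates the strong convergence to a neighbourhood of $\lambda_0$, and the resolvent identity passes to the limit, so the operators $R(\lambda)$ form a pseudoresolvent. Its range contains $D$ (from $R(\lambda_0)(\lambda_0 I-\A)u=u$) and hence is dense, and a pseudoresolvent with dense range is injective; therefore $R(\lambda)=R(\lambda:B)$ for a unique closed, densely defined $B$. The same identity yields $D\subseteq\D(B)$ and $B|_D=\A$, so $\bar{\A}\subseteq B$; conversely, for $u\in\D(B)$ one approximates $(\lambda_0 I-B)u$ by elements of the dense set $(\lambda_0 I-\A)D$ and applies $R(\lambda_0:B)$ to produce a sequence in $D$ converging to $u$ in the graph norm of $\A$, so in fact $B=\bar{\A}$. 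Finally, since powers of uniformly bounded strongly convergent operators converge strongly and the operator norm is lower semicontinuous, the bounds $\|R(\lambda:\A_n)^k\|\le M(\mathrm{Re}\,\lambda-\omega)^{-k}$ pass to $\bar{\A}$, and Hille--Yosida gives $\bar{\A}\in G(M,\omega)$.

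For the semigroup convergence I would interpolate through Yosida approximations. Set $\A_n^{(\mu)}=\mu^2R(\mu:\A_n)-\mu I$ and $B^{(\mu)}=\mu^2R(\mu:\bar{\A})-\mu I$ for real $\mu>\omega$; these are bounded operators whose semigroups $e^{t\A_n^{(\mu)}}$, $e^{tB^{(\mu)}}$ are bounded uniformly in $n$ and in $\mu$ bounded away from $\omega$, on bounded $t$-intervals, and $e^{t\A_n^{(\mu)}}u\to\T_n(t)u$, $e^{tB^{(\mu)}}u\to\T(t)u$ as $\mu\to\infty$. The point is that for $u\in D$ the first of these convergences is uniform in $n$ (and in $t$ on bounded intervals), because the Yosida-approximation error is controlled by $\|\mu R(\mu:\A_n)u-u\|\le M(\mu-\omega)^{-1}\sup_n\|\A_n u\|$, and $\sup_n\|\A_n u\|<\infty$ by assumption (a); density of $D$ and uniform boundedness extend the uniformity to every $u\in L^1(X)$. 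For fixed $\mu$, Duhamel's formula together with the strong convergence $\A_n^{(\mu)}\to B^{(\mu)}$ gives $e^{t\A_n^{(\mu)}}u\to e^{tB^{(\mu)}}u$ as $n\to\infty$, uniformly on bounded $t$-intervals. Then the three-term split
\[
\T_n(t)u-\T(t)u=\bigl(\T_n(t)u-e^{t\A_n^{(\mu)}}u\bigr)+\bigl(e^{t\A_n^{(\mu)}}u-e^{tB^{(\mu)}}u\bigr)+\bigl(e^{tB^{(\mu)}}u-\T(t)u\bigr),
\]
choosing $\mu$ large to make the outer terms small uniformly in $n$ and $t\in[0,t_0]$ and then $n$ large for the middle term, delivers the claimed convergence, uniform in $t$ on bounded intervals.

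The main obstacle is exactly this double limit in $n$ and $\mu$: one must ensure that the rate at which $e^{t\A_n^{(\mu)}}$ approximates $\T_n(t)$ depends on $n$ only through $M$, $\omega$ and quantities bounded uniformly in $n$. This is where assumption (a) does work beyond merely identifying the limit operator --- it is used on the \emph{dense} set $D$, where it automatically forces $\sup_n\|\A_n u\|<\infty$, and that is what makes the Yosida error estimate uniform. A secondary subtle point is the pseudoresolvent step, where density of the range has to be converted into injectivity before one may speak of $R(\lambda:B)$ at all.
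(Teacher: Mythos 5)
The paper does not prove this statement at all --- it is quoted verbatim as Theorem~3.4.5 of Pazy and used as a black box --- so your attempt can only be measured against the standard proof. Your first two paragraphs reproduce that proof essentially correctly: the identity $R(\lambda_0:\A_n)(\lambda_0 I-\A)u=u+R(\lambda_0:\A_n)(\A_n-\A)u$, the density/uniform-boundedness upgrade, the pseudoresolvent step and the identification $B=\bar{\A}$ are exactly the classical argument. (Two minor loosenesses there: convergence at the single point $\lambda_0$ is propagated by the Neumann series $R(\lambda:\A_n)=\sum_k(\lambda_0-\lambda)^kR(\lambda_0:\A_n)^{k+1}$ and a connectedness argument, not by Vitali, which needs convergence on a set with an accumulation point; and injectivity of the limiting pseudoresolvent uses the bound $\|\lambda R(\lambda)\|\le M\lambda/(\lambda-\omega)$ to get $\lambda R(\lambda)x\to x$ as $\lambda\to\infty$ --- dense range alone does not make a pseudoresolvent injective.)

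The genuine gap is in the semigroup-convergence step, at exactly the point you yourself single out as the main obstacle. Duhamel gives $\T_n(t)u-e^{t\A_n^{(\mu)}}u=\int_0^t e^{(t-s)\A_n^{(\mu)}}\T_n(s)\bigl(\A_nu-\A_n^{(\mu)}u\bigr)\,ds$, so the Yosida error is controlled by $\|\A_nu-\A_n^{(\mu)}u\|=\|(I-\mu R(\mu:\A_n))\A_nu\|$, whereas your estimate $\le M(\mu-\omega)^{-1}\sup_n\|\A_nu\|$ bounds the different quantity $\|(I-\mu R(\mu:\A_n))u\|$. For $u\in\D(\A_n)$ with no control on $\A_n^2u$, the relevant quantity $\|(I-\mu R(\mu:\A_n))\A_nu\|$ tends to zero with no rate, and its uniformity in $n$ is precisely what needs proof; as written your key inequality is simply not the one the argument requires. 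The step is repairable: split $(I-\mu R(\mu:\A_n))\A_nu$ into $(I-\mu R(\mu:\A_n))(\A_nu-\A u)+(I-\mu R(\mu:\A_n))(\A u-v)+(I-\mu R(\mu:\A_n))v$ with $v\in D$, so that the first two terms are small uniformly in $\mu$ (by hypothesis (a) and density of $D$) and only the last enjoys the rate $M(\mu-\omega)^{-1}\sup_n\|\A_nv\|$. Alternatively, Pazy avoids Yosida approximations entirely by integrating $\frac{d}{ds}\,\T_n(t-s)R(\lambda:\A_n)\T(s)R(\lambda:\bar{\A})x=\T_n(t-s)\bigl(R(\lambda:\bar{\A})-R(\lambda:\A_n)\bigr)\T(s)x$ and using that $R(\lambda:\bar{\A})^2$ has dense range. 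Until the uniform-in-$n$ estimate is actually established by one of these routes, the double limit in $n$ and $\mu$ is not justified.
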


The restriction of $\tG_n$ to $X$ is understood as follows. Every $u\in L^1(X)$ can be extended by the value zero outside of $X$ to obtain a $\tilde u\in L^1(\R^d)$. Then $\tG_nu$ is defined as the pointwise restriction of $\tG_n\tilde u$ to $X$.
\begin{center}
\fbox{\parbox[l]{15cm}{
\begin{corollary}[Convergence of the generated outflow semigroup]	\label{cor:main}
The semigroup generated by the restriction of $\tG_n$ to $ X$ converges pointwise in $L^1$ against $\tPt$ as $n\to\infty$ for every fixed $t\ge0$.
\end{corollary}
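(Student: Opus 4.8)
The plan is to invoke Theorem~\ref{T:ig_approx} with the identifications $\A_n:=\tG_n|_X$ (the restriction of $\tG_n$ to $X$ described above), $D:=C_v^1(X)$, and $\A:=\tG|_{C_v^1(X)}$. Since $C_c^1(\inter X)\subset C_v^1(X)\subset\D(\tG)$, the set $D$ is dense in $L^1(X)$ and $\A$ is a restriction of the genuine generator. The proof then reduces to three points, to be established as lemmas: \emph{(i)} $\A_n\in G(M,\omega)$ with $M,\omega$ independent of $n$; \emph{(ii)} hypothesis~(a) of Theorem~\ref{T:ig_approx}, i.e.\ $\A_n u\to\tG u$ in $L^1(X)$ for every $u\in C_v^1(X)$; and \emph{(iii)} hypothesis~(b), i.e.\ $(\lambda_0 I-\tG)\,C_v^1(X)$ is dense in $L^1(X)$ for some $\lambda_0$ with $\mathrm{Re}\,\lambda_0>\omega$. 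Granting these, Theorem~\ref{T:ig_approx} produces a $C_0$ semigroup $\T$, generated by $\bar\A$, with $\T_n(t)u\to\T(t)u$ in $L^1(X)$ for all $u$ and all $t\ge0$, where $\T_n$ is generated by $\A_n$. Since $\A\subset\tG$ and $\tG$ is closed we have $\bar\A\subset\tG$, and since $\bar\A$ and $\tG$ both generate contraction semigroups, comparing the resolvents at a large $\lambda$ forces $\bar\A=\tG$, hence $\T(t)=\tPt$ — the assertion. (Equivalently, \emph{(iii)} simply says that $C_v^1(X)$ is a core for $\tG$.) The remainder of the report establishes \emph{(i)}--\emph{(iii)}, following~\cite{KPPHD} and discussing only the modifications forced by the outflow.

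\emph{On (i).} Each $\tG_n$ has finite-dimensional range and is bounded, so $e^{t\A_n}$ is uniformly continuous; that the $\A_n$ lie in a common $G(M,\omega)$ — indeed that $e^{t\A_n}$ is a positive $L^1$-contraction, so $\A_n\in G(1,0)$ — follows from the positivity and the $L^1$-contractivity of $\pi_n$ and of the operators $\P^t$, by the same reasoning as lemmas~5.9 and~5.11 of~\cite{KPPHD} already invoked below~\eqref{eq:IGout}. Concretely, in the box-indicator basis $\A_n$ is represented by a matrix with nonnegative off-diagonal entries and nonpositive column sums; discarding the mass transported into $X_n\setminus X$ only lowers the column sums and thus preserves the contraction property.

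\emph{On (ii).} Fix $u\in C_v^1(X)$ and let $\tilde u$ be its zero extension to $\R^d$. Writing out $\tG_n\tilde u=\lim_{t\to0}t^{-1}(\pi_n\P^t\pi_n\tilde u-\pi_n\tilde u)$ exhibits it as a conservative finite-volume discretization of $u\mapsto-\mathrm{div}(uv)$ applied to $\pi_n\tilde u$, together with boundary-flux terms across $\partial X_n$ and across the box faces separating cells that meet $X$ from those that do not. As $n\to\infty$ the bulk part converges in $L^1(X)$ to $-\mathrm{div}(uv)=\tG u$, the flux through the inflow part $\{v\cdot\mathbf n<0\}$ of $\partial X$ vanishes because $u$ does there, and the outflow across $\{v\cdot\mathbf n>0\}$ corresponds to the mass lost by the outflow dynamics. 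The interchange of the limits $t\to0$ and $n\to\infty$ is handled as in~\cite{KPPHD} through uniform-in-$n$ equicontinuity bounds for $t\mapsto\pi_n\P^t\pi_n$; the main new element is a boundary-layer estimate near $\partial X$, where the set mismatch $X_n\setminus X$ has measure of the order of the covering fineness and hence contributes $o(1)$ in $L^1(X)$.

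\emph{On (iii), the expected main obstacle.} We must show $(\lambda_0 I-\tG)\,C_v^1(X)$ is dense, i.e.\ that $C_v^1(X)$ is a core for $\tG$. The convenient criterion ``$D$ dense and $\tPt$-invariant'' is unavailable, since for $C^1$ data $\tPt u$ develops jump discontinuities across the hypersurfaces of points whose backward orbit just grazes $\partial X$. I would argue directly: for $f$ in a dense set, say $f\in C_c^1(\inter X)$, set $u:=R(\lambda_0,\tG)f=\int_0^\infty e^{-\lambda_0 t}\tPt f\,dt$, so that $(\lambda_0 I-\tG)u=f$, and then approximate $u$ in the graph norm $\|\cdot\|_1+\|\tG\cdot\|_1$ by elements of $C_v^1(X)$. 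Along characteristics $u$ solves $\lambda_0 u+v\cdot\nabla u+u\,\mathrm{div}\,v=f$ with the zero-inflow condition built in, so $u=0$ on $\{v\cdot\mathbf n<0\}$ and $u$ is $C^1$ near every point whose entire backward orbit remains in $\inter X$; it can fail to be $C^1$ only on the (Lebesgue-null, under the standing hypotheses on $\partial X$) set $\Sigma$ of points whose backward orbit meets $\partial X$. Multiplying $u$ by a smooth cutoff that vanishes on an $\varepsilon$-neighbourhood of $\Sigma$ and then mollifying yields elements of $C_v^1(X)$, and as $\varepsilon\to0$ the error tends to $0$ in $L^1$ and — using $\tG u=f-\lambda_0 u\in L^1$ together with the boundedness of the transport derivative of $u$ away from $\Sigma$ — also in graph norm. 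The delicate points are the geometry of $\Sigma$ near $\partial X$ and the quantitative control of the cutoff error there; this is the outflow analogue of the core argument in~\cite{KPPHD}, and is in some respects easier, since the discontinuities along $\Sigma$ carry merely lost mass and no conservation constraint has to be reproduced by the approximants.
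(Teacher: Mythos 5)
Your overall architecture coincides with the paper's: invoke Theorem~\ref{T:ig_approx} with $\A_n=\tG_n|_X\in G(1,0)$ and verify hypotheses (a) and (b) on a dense set $D$. The decisive difference is your choice $D=C_v^1(X)$, and there the argument breaks. The paper takes $D=C_{0,0}^1(X)$, the restrictions to $X$ of $C^1(\R^d)$ functions supported inside $X$, and this is not a matter of convenience: hypothesis (a) fails for $D=C_v^1(X)$. The discrete generator acts on the zero extension $\tilde u$ of $u$ through $\pi_n$ and $\P^t$. If $u\in C_v^1(X)$ does not vanish on the outflow part $\{v\cdot\mathbf{n}>0\}$ of $\partial X$, then $\tilde u$ jumps across $\partial X$, and a box $X_{n,i}$ straddling the boundary carries an average $\pi_n\tilde u$ differing by $O(1)$ from the neighbouring interior averages. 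The resulting finite-volume quotient is of size $O(1/h_n)$ ($h_n$ the box diameter) on a boundary layer whose intersection with $X$ has measure of order $h_n$, so its contribution to $\|\tG_n u-\tG u\|_{L^1(X)}$ is $O(1)$, not $o(1)$ as asserted in your boundary-layer estimate. (A one-dimensional check: $X=[0,1]$, $v\equiv1$, $u(1)=1$, and a covering box $[1-h/2,\,1+h/2]$; the discrete generator on that box is $\approx u(1)/(2h)$, while $\tG u=-u'$ is bounded.) This is exactly why Lemma~\ref{L:IGconv} is stated for $C_{0,0}^1(X)$ only: for such $u$ the zero extension is globally $C^1$, there is no jump, and the local estimates of~\cite{KPPHD} apply uniformly up to the boundary.

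Your treatment of (iii) is also heavier than needed: with $D=C_{0,0}^1(X)$ the paper obtains hypothesis (b) directly from the resolvent invariance $(\lambda-\tG)^{-1}C_{0,0}^1(X)\subseteq C_{0,0}^1(X)$ of Lemma~\ref{L:inclusion}(b), which yields $C_{0,0}^1(X)\subseteq(\lambda-\tG)C_{0,0}^1(X)$ with no cutoff or mollification near the grazing set. Your closing observation that $\bar{\A}=\tG$ must still be identified (via $\bar{\A}\subseteq\tG$ and comparison of resolvents of two contraction semigroup generators) is correct and is a point the paper leaves implicit. To repair the proof, replace $C_v^1(X)$ by $C_{0,0}^1(X)$ throughout; the remainder of your outline then aligns with the paper's argument.
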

}}
\end{center}
\begin{proof}
It can be shown as in Theorem~5.21~\cite{KPPHD} that $\tG_n\in G(1,0)$ for all $n$. Set $D=C_{0,0}^1( X)$ (see the definition below). Assumption (a) of Theorem~\ref{T:ig_approx} holds by Lemma~\ref{L:IGconv}. Assumption (b) holds by Lemma~\ref{L:inclusion} (b), since it gives $C_{0,0}^1( X)\subseteq (\lambda-\tG)C_{0,0}^1( X)$, and $C_{0,0}^1( X)$ is dense in $L^1( X)$.
\end{proof}

Even though we just restricted the operator $\tG_n$ to $X$, it will prove useful to extend the support of the functions under consideration. Define
\[
C_{X}^1(\R^d):=\left\{f\in C^1(\R^d)\Big\vert \mathrm{supp}\, f\subset X \right\},
\]
and its restriction to $X$
\[
C_{0,0}^1( X) = \left\{f\big\vert_{X}\ \Big\vert f\in C_{ X}^1(\R^d)\right\}.
\]
Let $u\in C_X^1(\R^d)$ and denote $\tilde u=u\vert_X$. Observe that $\tG \tilde u = \left(\G u\right)\!\big\vert_X$, where $\G$ is the generator of the semigroup of Frobenius--Perron operators associated with $\phi^t$. 

\begin{lemma}[cf.~\cite{KPPHD} Lemma~5.17]	\label{L:IGconv}

\quad

We have $\big\|\tG_n u-\tG u\big\|_{L^1( X)}\to0$ as $n\to\infty$ for $u\in C_{0,0}^1(X)$.
\end{lemma}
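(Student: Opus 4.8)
The plan is to compare the two generators by passing to the defining limits and controlling the discretization error uniformly. Fix $u\in C_{0,0}^1(X)$ and let $\tilde u\in C_X^1(\R^d)$ be the corresponding compactly supported extension. By the observation preceding the lemma, $\tG\tilde u=(\G u)\vert_X$, where $\G$ is the generator of the (full, volume-preserving-in-the-sense-of-the-Jacobian) Frobenius--Perron semigroup $\P^t$ of $\phi^t$, so $\G u=\tfrac{d}{dt}\big|_{t=0}\P^t u$ exists in $L^1(\R^d)$ because $u\in C^1$ with compact support (here $\G u=-\mathrm{div}(uv)$). The first step is therefore to rewrite
\[
\tG_n u-\tG u=\Big(\lim_{t\to0}\tfrac1t\big(\pi_n\P^t\pi_n u-\pi_n u\big)\Big)-(\G u)\vert_X
\]
and split this, for each small $t>0$, as
\[
\pi_n\Big(\tfrac{\P^t\pi_n u-\pi_n u}{t}\Big)-(\G u)\vert_X
=\pi_n\Big(\tfrac{\P^t\pi_n u-\pi_n u}{t}-\tfrac{\P^t u-u}{t}\Big)
+\pi_n\Big(\tfrac{\P^t u-u}{t}-\G u\Big)
+\big(\pi_n(\G u)-\G u\big)\vert_X .
\]
The third term tends to $0$ in $L^1$ as $n\to\infty$ because $\G u\in L^1$ and $\pi_n\to I$ strongly (standard property of the orthogonal projections onto refining box bases, used already in~\cite{KPPHD}).

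The core of the argument is the interchange of the limits $t\to0$ and $n\to\infty$, i.e.\ controlling the first two terms uniformly. For the second term, $\tfrac1t(\P^t u-u)\to\G u$ in $L^1(\R^d)$ as $t\to0$, and since $\|\pi_n\|_{L^1\to L^1}\le1$ this convergence survives after applying $\pi_n$, uniformly in $n$. For the first term I would use the contractivity of $\P^t$ on $L^1$ together with the quantitative estimate $\|\pi_n w-w\|_{L^1}\le c\,\mathrm{diam}(X_n)\,\|w\|_{C^1}$ valid for $C^1$ functions: writing $\P^t\pi_n u-\pi_n u-(\P^t u-u)=(\P^t-I)(\pi_n u-u)$, we get $\|\P^t\pi_n u-\pi_n u-(\P^t u-u)\|_{L^1}\le 2\|\pi_n u-u\|_{L^1}\le 2c\,\mathrm{diam}(X_n)\,\|u\|_{C^1}$, which is $o(1)$ as $n\to\infty$ \emph{uniformly in $t$}. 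Dividing by $t$ is the one place care is needed: rather than dividing the crude bound by $t$, I would instead use that $(\P^t-I)(\pi_n u-u)=\int_0^t \P^s\,\G(\pi_n u-u)\,ds$ when $\pi_n u-u$ lies in $\D(\G)$ — which it need not — so the cleaner route is to first take $t\to0$ inside $\pi_n$ (legitimate since, by lemmas~5.9 and~5.11 of~\cite{KPPHD}, $\tG_n$ is a bounded operator and the difference quotient converges), obtaining $\tG_n u=\pi_n A_n u\vert_X$ for an appropriate $L^1$-bounded surrogate of $\G$ on $V_n$, and then estimate $\|\pi_n(\G(\pi_n u))-\G u\|_{L^1}$ directly. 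Concretely, $\G u=-\mathrm{div}(uv)$ and on each box $X_{n,i}$ the discrete generator reduces (as in~\cite{KPPHD} Ch.~5) to a flux balance $\tfrac1{m(X_{n,i})}\int_{\partial X_{n,i}} (\pi_n u)\,v\cdot\mathbf n\,dm_{d-1}$, which is the cell average of $-\mathrm{div}(uv)$ up to an error controlled by $\|\nabla u\|_\infty$, $\|v\|_{C^1}$, and $\mathrm{diam}(X_n)$; summing these errors in $L^1$ over the boxes meeting $X$ gives a bound of order $\mathrm{diam}(X_n)$, hence the claim.

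The main obstacle is precisely this interchange of $t\to0$ and $n\to\infty$ together with the division by $t$: the naive bound $2\|\pi_n u-u\|_{L^1}$ is uniform in $t$ but blows up when divided by $t$, while the bound that behaves well after dividing by $t$ (via $\int_0^t\P^s\G(\cdot)\,ds$) requires the argument to be in $\D(\G)$, which $\pi_n u-u$ is not since $\pi_n u$ is only piecewise constant. The resolution, following~\cite{KPPHD} Lemma~5.17, is to avoid the difference-quotient form altogether and work with the closed-form flux expression for $\tG_n u$ guaranteed by lemmas~5.9 and~5.11, reducing everything to the purely "static" estimate that cell-averaged divergence converges to divergence in $L^1$ for $C^1$ integrands as the mesh refines — which is elementary but must be carried out paying attention to the boundary boxes $X_{n,i}$ that straddle $\partial X$, where the definition of $\tG_n$ via the \emph{unrestricted} semigroup $\P^t$ (not $\tPt$) is what makes the boundary flux terms cancel correctly.
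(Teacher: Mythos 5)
Your final argument --- discarding the difference-quotient decomposition and instead comparing the closed-form per-box flux expression for $\tG_n u$ with the cell averages of $\G u=-\mathrm{div}(uv)$, paying special attention to the boundary boxes where $\mathrm{supp}\,u\subset X$ removes any inflow from outside $X_n$ --- is exactly the route the paper takes: it derives the local estimates of~\cite{KPPHD} Lemma~5.17 box by box and observes they apply globally because $u$, $D_xu$, $v$, $D_xv$ and $D_x^2v$ are bounded on a compact set containing $\bigcup_n X_n$. The only cosmetic difference is that the paper bounds $\big\|\tG_n u-\G u\big\|_\infty$ and deduces the $L^1$ claim, whereas you sum the per-box errors directly in $L^1$.
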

\begin{proof}
We show now $\big\|\tG_n u-\G u\big\|_{\infty}\to0$ for $u\in C_X^1(\R^d)$, which implies the claim.

The proof of Lemma~5.17 in~\cite{KPPHD} estimates the difference between $\G u$ and $\tG_n u$ locally for each box in the covering. There, the functions $u$ and $v$ are bounded on the compact state space, just as their derivatives, hence these local estimates apply globally, and the convergence is shown.

Here, the local estimates can be derived analogously, by noting the following. If the box under consideration is on the boundary of the covering, from ${\R^d\setminus X_n}$ there is no inflow into the box, since $u$ vanishes outside $X_n$. For the same reason, outside the covering both $\tG_nu$ and $\G u$ are zero.

Since ${X\cap X_{n,i}\neq\emptyset}$ for all $n,i$, the state space $X$ is compact and the box diameters tend to zero as $n\to\infty$, there is a compact set containing $\bigcup_{n\in\N}X_n$. Hence, $u, D_x u, v, D_x v$ and $D_x^2v$ are bounded functions on $\bigcup_{n\in\N}X_n$. By this, the local estimates apply globally. This completes the proof.
\end{proof}

\begin{lemma}[cf.~\cite{KPPHD} Lemma~5.20]	\label{L:inclusion}
\quad
\begin{enumerate}[(a)]
	\item For a sufficiently large $\lambda>0$ one has $(\lambda-\tG)^{-1}u\in C_v^1( X)$ for $u\in C_v^1( X)$.
	\item For a sufficiently large $\lambda>0$ one has $(\lambda-\tG)^{-1}u\in C_{0,0}^1( X)$ for $u\in C_{0,0}^1( X)$.
\end{enumerate}
\end{lemma}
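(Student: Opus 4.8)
The plan is to represent the resolvent as the Laplace transform of the semigroup and then to integrate the resulting stationary transport equation along characteristics, reading both the regularity and the boundary behaviour off the explicit formula. Since $\{\tPt\}_{t\ge0}$ is a $C_0$ contraction semigroup, $\tG\in G(1,0)$, so for every $\lambda>0$ the operator $\lambda-\tG$ is boundedly invertible and
\[
w:=(\lambda-\tG)^{-1}u=\int_0^{\infty}e^{-\lambda t}\,\tPt u\,dt ,
\]
the integral converging in $L^1(X)$ because $\|\tPt\|\le 1$. Inserting~\eqref{eq:FPOout} and the Liouville identity $|\det D_x\phi^{-t}x|=\exp(-\!\int_0^t(\mathrm{div}\,v)(\phi^{-s}x)\,ds)$ gives, for $x\in\inter X$,
\[
w(x)=\int_0^{\tau(x)}\underbrace{e^{-\lambda t}\,u(\phi^{-t}x)\,\exp\!\Bigl(-\!\int_0^{t}(\mathrm{div}\,v)(\phi^{-s}x)\,ds\Bigr)}_{=:\,g(x,t)}\,dt ,
\]
where $\tau(x):=\sup\{t\ge 0:\phi^{-s}x\in\inter X\text{ for all }s\in[0,t]\}$ is the backward exit time; equivalently, $w$ is the $\D(\tG)$-solution of $\lambda w+\mathrm{div}(vw)=u$ with the vanishing-at-inflow condition encoded in $\D(\tG)$.

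Next I would establish the $C^1$-regularity and deduce (a). As $v\in C^2$, the map $(t,x)\mapsto\phi^{-t}x$ is $C^2$ and the exponential Jacobian factor is $C^1$ in $(t,x)$, so $g$ is jointly $C^1$; moreover all backward trajectories of the compact set $X$ over a bounded time window remain in one fixed compact set, on which $g$ and its first derivatives are uniformly bounded. Hence the only possible loss of smoothness of $w$ enters through the dependence of the upper limit $\tau$ on $x$. On the open set of those $x$ whose backward trajectory meets $\partial X$ transversally — at a point where $v\cdot\mathbf{n}\ne0$ — the implicit function theorem gives $\tau\in C^1$, and differentiation under the integral sign yields $D_x w(x)=\int_0^{\tau(x)}D_x g(x,t)\,dt+g(x,\tau(x))\,D_x\tau(x)$; here $D_x g(x,t)$ grows at most like $e^{\|D_x v\|_\infty t}$ through the factor $D_x\phi^{-t}x$, so the integral is bounded uniformly in $x$ once $\lambda>\|D_x v\|_\infty$, which is where the hypothesis of a sufficiently large $\lambda$ enters. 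One must then show that the remaining $x$ — those whose backward trajectory exits $\partial X$ tangentially — form a negligible set across which the gradient formula extends continuously; this is the one genuinely delicate point of (a), to be handled as in~\cite{KPPHD}. Finally, as $\inter X\ni x\to x_0$ with $x_0\in\{v\cdot\mathbf{n}<0\}\subset\partial X$ one has $\tau(x)\to 0$, so $w$ extends continuously to $x_0$ by the value $0$; altogether $w\in C_v^1(X)$, which is (a).

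For (b), $u=f|_X$ with $\mathrm{supp}\,f\subset X$, so $u$ and $D_x u$ vanish on all of $\partial X$. I would use this first to eliminate the delicate point of (a): since $\phi^{-\tau(x)}x\in\partial X$, the Leibniz term $g(x,\tau(x))\,D_x\tau(x)$ vanishes identically, so $D_x w(x)=\int_0^{\tau(x)}D_x g(x,t)\,dt$ with no exceptional set and $w\in C^1(X)$. It then remains to show that $w$ is the restriction of a $C^1$-function supported in $X$, i.e.\ that $w$ and $D_x w$ vanish on all of $\partial X$. On the inflow and tangential parts of $\partial X$ this is immediate from $\tau(x)\to0$. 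On the outflow part, however, the backward characteristic through $x$ enters $\inter X$ and may run through $\mathrm{supp}\,u$, so smallness of $w$ there is not apparent: proving that $w$ and $D_x w$ nonetheless vanish on the outflow part of $\partial X$ is the step I expect to be the main obstacle, and one where the first-order vanishing of $u$ at $\partial X$ — together with the largeness of $\lambda$ — must be used in an essential way.
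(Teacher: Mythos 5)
Your overall strategy is the paper's own: write $(\lambda-\tG)^{-1}u=\int_0^\infty e^{-\lambda t}\tPt u\,dt$ and read regularity and boundary values off the explicit action of $\tPt$ along backward characteristics. For part (a) your account is in fact more explicit than the paper's (the exit time $\tau$, the Leibniz term, the tangential exceptional set), and it closes correctly once you add the observation you left implicit: for $u\in C_v^1(X)$ the Leibniz term $g(x,\tau(x))D_x\tau(x)$ vanishes wherever $\tau$ is differentiable, because the backward exit point $\phi^{-\tau(x)}x$ lies in $\{v\cdot\mathbf{n}\le0\}$, where $u=0$. So (a) is essentially the paper's argument, reproduced with more care.

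Part (b), however, is not a proof: you explicitly leave open the step that $w$ and $D_xw$ vanish on the outflow part of $\partial X$, and that is precisely the content of the claim. Your instinct that this is ``the main obstacle'' is vindicated in the strongest possible sense: the step fails. Take $X=[0,1]$, $v\equiv 1$, and $u\in C_{0,0}^1([0,1])$ a nonnegative bump supported in $[1/3,2/3]$. Then $\tPt u(x)=u(x-t)$ for $0<t<x<1$, so the continuous representative of $w=(\lambda-\tG)^{-1}u$ is $w(x)=\int_0^x e^{-\lambda(x-s)}u(s)\,ds$, and $w(1)=e^{-\lambda}\int_0^1 e^{\lambda s}u(s)\,ds>0$ for \emph{every} $\lambda>0$; by the Remark following the lemma, $w\notin C_{0,0}^1(X)$. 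The paper's own proof of (b) obtains the vanishing ``by the chain rule applied to~\eqref{eq:FPOout}'', but that argument needs $u$ and $D_xu$ to vanish at $\phi^{-t}x$, and for $x$ on the outflow part of $\partial X$ the point $\phi^{-t}x$ lies in $\inter X$, where $u$ need not vanish. So the gap in your proposal cannot be filled as stated; what Corollary~\ref{cor:main} actually needs is only density of $(\lambda-\tG)C_{0,0}^1(X)$ in $L^1(X)$, and repairing the argument would require establishing that weaker statement (or replacing $D$) rather than the invariance claimed in (b).
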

\begin{remark}
Note, that from $u\in C_{0,0}^1( X)$ follows $u=0$ and $D_xu=0$ on $\partial X$.
\end{remark}
\begin{proof}
It holds ${(\lambda-\tG)^{-1}u = \int_0^{\infty}e^{-\lambda t}\tPt u\, dt}$; see Remark~1.5.4 in~\cite{Pazy83}. To show the claims analogously to the proof of Lemma~5.20 in~\cite{KPPHD}, we have to show differentiability of the function ${x\mapsto\tPt u(x)}$ w.r.t.\ $x$ for all $t$, and the integrability of ${e^{-\lambda t}\tPt u(x)}$ w.r.t.\ $t$ for all $x\in X$. This will imply ${(\lambda-\tG)^{-1}u\in C^1( X)}$. Additionally, the boundary conditions have to be checked.\\
(a). The function ${x\mapsto\tPt u(x)}$ is continuously differentiable for almost every $t\ge0$ with derivatives bounded exponentially in $t$, since $u$ is zero on the ``inflow'' part of the boundary; cf.~\eqref{eq:FPOout} (this shows the integrability for a $\lambda$ sufficiently large). There can be only one $t$ for which $\tPt u(x)$ is not differentiable: the smallest $t$ such that $\tphi^{-t}x\in\partial X$. However, it is still continuous, hence the subdifferentials are bounded at this point, so the above integral exists and is a function differentiable in $x$. Since $\tPt u(x)=0$ for all $t\ge0$ and $x\in\{v\cdot\mathbf{n}<0\}$, we have $(\lambda-\tG)^{-1}u(x)=0$ for $x\in\{v\cdot\mathbf{n}<0\}$ as well.

\noindent (b). The function ${x\mapsto\tPt u(x)}$ is continuously differentiable for every $t\ge0$ with derivatives bounded exponentially in $t$, since $u$ and $D_xu$ is zero on the whole boundary (this is the only point, where a discontinuity like in (a) could arise, but the choice of ${u\in C_{0,0}^1( X)}$ makes this impossible). Applying the chain rule to~\eqref{eq:FPOout}, we see that $u\big\vert_{\partial X}=0$ and $D_xu\big\vert_{\partial X}=0$ implies ${\tPt u\big\vert_{\partial X}=0}$ and ${D_x\tPt u\big\vert_{\partial X}=0}$. This completes the proof.
\end{proof}

\section{Conclusion}

We proved the $L^1$-pointwise convergence of the semigroup generated by the approximate generator, defined in~\eqref{eq:IGout}, to the semigroup of Frobenius--Perron operators, given in~\eqref{eq:FPOout}, associated with the outflow system.

The proof was based on techniques used in~\cite{KPPHD}, and included a crucial assumption, namely that all boxes in a covering are congruent. Without this, we don't have the convergence ${\tG_nu\to\tG u}$, as the example in Remark~5.18~\cite{KPPHD} shows. However, numerical experiments suggest that the claim of Corollary~\ref{cor:main} remains true, as long as the diameter of the largest box in the covering tends to zero, and the other assumptions are valid. To prove this, it seems to be necessary to show the convergence of the resolvents, i.e.\ $\big(\lambda-\tG_n\big)^{-1}\to\big(\lambda-\tG\big)^{-1}$, instead of ``just'' the convergence of the generators.

Subject of future work is also the extension of the result obtained here to non-deterministic systems given by the stochastic differential equation
\[
	dX = v(X)dt + dW,
\]
where $W$ denotes the Brownian motion.

\bibliographystyle{alpha}

%
%
%
%
%
%

\bibliography{../../Literatur}

\end{document}